\DeclareMathOperator{\Pf}{Pf}
\DeclareMathOperator{\id}{id}
\DeclareMathOperator{\SO}{SO}
\begin{document}
\setlength{\baselineskip}{1.4\baselineskip}
\theoremstyle{definition}
\newtheorem{defi}{Definition}
\newtheorem{remark}{Remark}
\newtheorem{coro}{Corollary}
\newtheorem{exam}{Example}
\newtheorem{thm}{Theorem}
\newtheorem{prop}{Proposition}
\newtheorem{lemma}{Lemma}
\numberwithin{equation}{section}
\newcommand{\wt}[1]{{\widetilde{#1}}}
\newcommand{\ov}[1]{{\overline{#1}}}
\newcommand{\wh}[1]{{\widehat{#1}}}
\newcommand{\poin}{Poincar\'e~}
\newcommand{\deff}[1]{{\bf\emph{#1}}}
\newcommand{\boo}[1]{\boldsymbol{#1}}
\newcommand{\abs}[1]{\lvert#1\rvert}
\newcommand{\norm}[1]{\lVert#1\rVert}
\newcommand{\inner}[1]{\langle#1\rangle}
\newcommand{\poisson}[1]{\{#1\}}
\newcommand{\biginner}[1]{\Big\langle#1\Big\rangle}
\newcommand{\set}[1]{\{#1\}}
\newcommand{\Bigset}[1]{\Big\{#1\Big\}}
\newcommand{\BBigset}[1]{\bigg\{#1\bigg\}}
\newcommand{\dis}[1]{$\displaystyle#1$}
\newcommand{\V}{\mathcal{V}}
\newcommand{\R}{\mathbb{R}}
\newcommand{\N}{\mathbb{N}}
\newcommand{\Z}{\mathbb{Z}}
\newcommand{\Q}{\mathbb{Q}}
\newcommand{\h}{\mathbb{H}}
\newcommand{\g}{\mathfrak{g}}
\newcommand{\C}{\mathbb{C}}
\newcommand{\RRR}{\mathscr{R}}
\newcommand{\DDD}{\mathscr{D}}
\newcommand{\so}{\mathfrak{so}}
\newcommand{\gl}{\mathfrak{gl}}
\newcommand{\LL}{\mathcal{L}}
\newcommand{\BB}{\mathcal{B}}
\newcommand{\CC}{\mathcal{C}}
\newcommand{\HH}{\mathcal{H}}
\newcommand{\G}{\mathcal{G}}
\newcommand{\sss}{\mathbb{S}}
\newcommand{\E}{\mathcal{E}}
\newcommand{\EE}{\mathscr{E}}
\newcommand{\UU}{\mathcal{U}}
\newcommand{\F}{\mathcal{F}}
\newcommand{\cdd}[1]{\[\begin{CD}#1\end{CD}\]}
\normalsize
\title{Gauss--Bonnet--Chern theorem and differential characters}
\author{Man-Ho Ho}
\address{Department of Mathematics\\ Hong Kong Baptist University}
\email{homanho@hkbu.edu.hk}
\subjclass[2010]{Primary 58J20, 53C08, 57R20}
\maketitle
\nocite{*}
\begin{abstract}
In this paper we first prove that every differential character can be represented by
differential form with singularities. Then we lift the Gauss--Bonnet--Chern theorem
for vector bundles to differential characters.
\end{abstract}
\tableofcontents
\section{Introduction}

The purpose of this paper is to prove that every differential character
\cite{CS85, BB14} can be represented by differential form with singularities and to
prove a version of the Gauss--Bonnet--Chern theorem (GBC theorem) for vector bundles
taking values in differential characters.

The subject of study in this paper is differential characters, whose philosophy can
be traced back to Chern's intrinsic proof of the GBC theorem \cite{C44, C45}. The
main idea is transgression form. For a given even dimensional Riemannian manifold $X$
Chern constructs a differential form on the total space of the sphere bundle of
$TX\to X$ so that its differential is the pullback of the Euler form of $X$. Since
then transgression form had found lots of applications. Among them, transgression
form is used to give integral formula of characteristic classes (see
\cite{C44a, A50, T53, E59, W76a, W76b, M80, M86} for example). An instructive example
is the Stiefel--Whitney class. For a given $k\in\N$ and an integral domain $A$ of
$\R$, Allendoerfer and Eells define the notion of an $(A, k)$-pair of differential
forms with singularities \cite{AE58} on a manifold $X$ (the precise definition is
given in \S2.2). They use it to define de Rham cohomology of $X$ with coefficients in
$A$ and prove an analogue of the de Rham theorem. Then Eells gives an integral formula
of the $k$-th Stiefel--Whitney class of orientable Riemannian manifolds
\cite[Theorem 4D]{E59} using a particular $(\Z, k)$-pair of forms, which are certain
pullbacks of the $k$-th universal Gauss curvature form and the $(k-1)$-th universal
geodesic curvature form.

Using Chern--Weil theory, Chern and Simons are able to express any given characteristic
form, which lives in the total space of the principal bundle, in terms of the exterior
differential of its transgression form \cite[Proposition 3.2]{CS74}. It turns out that
such a transgression form defines a cohomology class of the principal bundle in certain
cases (see \cite[Theorem 3.9]{CS74} for example). Thus it can be regarded as a secondary
invariant of principal bundle. One of the motivations of defining differential
characters is to obtain secondary invariants of principal bundle living in the base
\cite[Proposition 2.8]{CS85}. Moreover, the Euler class, the Chern classes and the
Pontrjagin classes of vector bundles can be lifted to differential characters and each
of them can be represented by a pair of characteristic form and its transgression form
(in a certain sense) via some integral formulas \cite[\S3-5]{CS85}. Then a natural
question is whether every differential character can be represented by a pair of
differential forms (possibly with singularities, for example the integral formula of
Stiefel--Whitney class given by Eells mentioned above) via integral formula.

Cheeger answers the question by stating (without proof) that every differential
character can be represented by differential form with singularities
\cite[Proposition 3]{C73}. Motivated by deriving formulas for the Abel--Jacobi map of
groups of cycles on a complex algebraic variety to intermediate Jacobians, Harris
\cite{H89} gives a proof of \cite[Proposition 3]{C73} under additional assumptions.
In the other direction, B\"ar--Becker \cite{BB14} show that every differential
character can be written as a sum of two integrals of differential forms when the
underlying manifold is extended to a large class of stratified spaces, called
stratifolds. In this paper we prove \cite[Proposition 3]{C73} in full generality
(Proposition \ref{prop 1}). More precisely, we prove that any differential character
$f\in\wh{H}^k(X; \R/A)$ can be represented by an $(A, k)$-pair of differential forms
with singularities.

The answer to the above question leads us to ask whether equalities of characteristic
classes (or forms) can be lifted to the level of differential characters. We show that
the GBC theorem does have such a lift (Theorem \ref{thm 1}). This is not surprising
because Chern's intrinsic proof of the GBC theorem is the starting point of secondary
invariants. Thus the GBC theorem ought to take place in differential characters.
However, we cannot find such a statement nor a proof in literature so we feel it is
worthwhile to write it down. Another example of such lifts is the differential
Grothendieck--Riemann--Roch  theorem \cite{B05, BS09, FL10}.

The version of the GBC theorem considered here states that for a given oriented
Euclidean vector bundle $(E, h^E, \nabla^E)$ with a Euclidean metric $h^E$ and a
metric-compatible connection $\nabla^E$ we have the following equality in differential
characters
$$\wh{\chi}(\nabla^E)-v^*\wh{U}(h^E, \nabla^E)=i_2(\omega),$$
where $\wh{\chi}$ is the differential Euler class, $\wh{U}^*$ is the differential
Thom class, and $v$ is an arbitrary but fixed section of $E\to X$ (the remaining
details are given in \S2.3). Another purpose of stating and proving the GBC theorem
taking values in differential characters is to serve as a first step to extend the
GBC theorem for (at least) flat vector bundles over manifolds with boundary
\cite[Theorem 3.2]{BM06} to differential characters.

\subsection{Relation to previous work}

Harvey--Lawson--Zweck \cite{HLZ03} define differential characters in terms of
de Rham--Federer theory of currents, and they show that it is naturally isomorphic to
the Cheeger--Simons' definition of differential characters \cite[Theorem 4.1]{HLZ03}.
Thus \cite[Theorem 4.1]{HLZ03} and our proof of \cite[Proposition 3]{C73} are the same
in spirit, but stated in a different language.

Harvey--Zweck \cite[\S 8]{HZ01} define the differential Euler class $\wh{\chi}$ in
terms of the Euler sparks \cite[Definition 7.3]{HZ01}, and they give a comparison
between the approaches of defining $\wh{\chi}$ in \cite{CS85} and in \cite[\S8]{HZ01}
is given in \cite[Remark 8.9]{HZ01}. Moreover, they prove a refined version of the
GBC theorem \cite[Corollary 5.3]{HZ01} which is stated in the language of currents.
One can apply these results to prove Theorem \ref{thm 1} in the language of
\cite{HLZ03}.

The paper is organized as follows. In section 2 we review the background material
needed to prove the main results, including Cheeger--Simons differential characters,
differential form with singularities and the Mathai--Quillen's Thom form. In section
3 we prove the main results.

\section*{Acknowledgement}

The author would like to thank Christian Becker and Steve Rosenberg for their comments
and valuable suggestions.

\section{Background material}

Throughout this paper, $X$ is a closed manifold, $A$ is a proper subring of $\R$ and
$\pi:E\to X$ is a real oriented vector bundle of rank $2k$. Let $h^E$ be a Euclidean
metric and $\nabla^E$ a Euclidean connection on $E\to X$ respectively. We write these
data as $(\pi:E\to X, h^E, \nabla^E)$.

\subsection{Cheeger--Simons differential characters}

In this subsection we recall Cheeger--Simons differential characters \cite{CS85, BB14}.

Let $k\geq 1$. A degree $k$ differential character with coefficients in $A$ is a group
homomorphism $f:Z_{k-1}(X)\to\R/A$ for which there exists a fixed $\omega_f\in\Omega^k
(X)$ such that for all $c\in C_k(X)$,
$$f(\partial c)=\int_c\omega_f\mod A.$$
The abelian group of degree $k$ differential characters is denoted by $\wh{H}^k(X;
\R/A)$. Note that $\omega_f$ is a closed $k$-form with periods in $A$ and is uniquely
determined by $f\in\wh{H}^k(X; \R/A)$. Denote by $\Omega_A^k(X)$ the group of closed
$k$-forms on $X$ with periods in $A$. Define a map $\delta_1:\wh{H}^k(X; \R/A)\to
\Omega_A^k(X)$ by $\delta_1(f)=\omega_f$. For every $f\in\wh{H}^k(X; \R/A)$ there
exists $T\in C^k(X; \R)$ such that \dis{(\delta T)(c)=\int_c\omega_f\mod A}. Thus there
exists $u\in C^k(X; A)$ such that $\delta T=\omega_f-u$. The cohomology class $[u]\in
H^k(X; A)$ is independent of the choice of $T$ and it satisfies $r([u])=\omega_f$,
where $r$ is induced by the inclusion of coefficients $A\hookrightarrow\R$. Define a
map $\delta_2:\wh{H}^k(X; \R/A)\to H^k(X; A)$ by $\delta_2(f)=[u]$. Every $(k-1)$-form
$\omega$ can be considered as a differential character by integrating over
$(k-1)$-cycles, and is realized by the map \dis{i_2:\frac{\Omega^{k-1}(X)}
{\Omega^{k-1}_A(X)}\to\wh{H}^k(X; \R/A)} defined by \dis{i_2(\omega)(z)=\int_z\omega
\mod A}, where $z\in Z_{k-1}(X)$, is injective.

\subsection{Differential form with singularities}

In this subsection we briefly recall differential form with singularities \cite{AE58}
and its relation with differential characters \cite{C73, H89}.

Let $\varphi$ be a $(k-1)$-form defined on $X\setminus e(\varphi)$, where $e(\varphi)$ is
a closed nowhere dense set. Suppose $\omega$ is a $k$-form defined on $X\setminus
e(\omega)$, where $e(\omega)$ is a closed subset of $e(\varphi)$, such that $\omega$ is
an extension of $d\varphi$ to $X\setminus e(\omega)$. A chain $c\in C_k(X; A)$ satisfying
$\abs{c}\cap e(\omega)=\emptyset$ and $\abs{\partial c}\cap e(\varphi)=\emptyset$ is
called an admissible chain for the pair $(\omega, \varphi)$. The pair $(\omega, \varphi)$
is called an $(A, k)$-pair \cite[\S 3]{AE58} if
\begin{enumerate}
  \item $e(\omega)$ and $e(\varphi)$ lie on smooth locally finite polyhedra of dimensions
        $\leq n-k-1$ and $\leq n-k$ respectively, and
  \item for any admissible $k$-chain $c\in C_k(X; A)$ for the pair $(\omega, \varphi)$,
        the difference
        $$R[(\omega, \varphi), c]:=\int_c\omega-\int_{\partial c}\varphi$$
        lies in $A$.
\end{enumerate}

Define an equivalence relation on the set of $(A, k)$-pairs as follows. Let $(\omega,
\varphi)\sim(\omega', \varphi')$ if $R[(\omega, \varphi), c]=R[(\omega', \varphi'), c]$
for all chains $c\in C_k(X; A)$ admissible for both pairs. Still denote by $(\omega,
\varphi)$ the equivalence class of the $(A, k)$-pair $(\omega, \varphi)$ and by
$\Omega^k(X; A)$ the set of equivalence classes of $(A, k)$-pairs. Note that $\Omega^k
(X; A)$ is an $A$-module.

For any $(A, k)$-pair $(\omega, \varphi)$ with $e(\omega)=\emptyset$, one can define a
differential character as follows \cite{C73}. For any $z_{k-1}\in Z_{k-1}(X)$, there
exists $z_{k-1}'\in Z_{k-1}(X)$ such that $\abs{z_{k-1}'}\cap\abs{e(\varphi)}=\emptyset$
and $z_{k-1}=\partial c_k+z_{k-1}'$ for some $c_k\in C_k(X)$. Define a differential
character $s(\varphi):Z_{k-1}(X)\to\R/A$ associated to $(\omega, \varphi)$ by
\begin{equation}\label{eq 2.1}
s(\varphi)(z_{k-1})=\int_{c_k}\omega+\int_{z_{k-1}'}\varphi\mod A.
\end{equation}
We call $s(\varphi)$ the differential character induced by the differential form $\varphi$
with singularities.

\subsection{Mathai--Quillen's Thom form}\label{s 2.3}

In this subsection we briefly recall the construction of the Mathai--Quillen's Thom form
\cite{MQ86} using the Berezin integral given in \cite[\S 1.6]{BGV}, but follow the sign
convention given in \cite[\S 3.2]{Z01}.

The Euler form $\chi(\nabla^E)\in\Omega^{2k}_\Z(X)$ of $(\pi:E\to X, h^E, \nabla^E)$ is
defined by
$$\chi(\nabla^E)=\frac{1}{(2\pi)^k}\Pf(R^E).$$
Denote by $T:\Omega(X, \Lambda(E))\to\Omega(E)$ the Berezin integral. Consider the
pullback
\cdd{\pi^*E @>>> E \\ @VVV @VVV \\ E @>>\pi> X}
Equip $\pi^*E\to E$ with the Euclidean metric $\pi^*h^E$ and Euclidean connection
$\pi^*\nabla^E$. The tautological section $\bold{x}\in\Gamma(E, \pi^*E)$ is defined by
$\bold{x}(e)=e\in(\pi^*E)_e\cong E_{\pi(e)}$. Define
\begin{equation}\label{eq 2.2}
\Omega:=\frac{1}{2}\norm{\bold{x}}^2+\pi^*\nabla^E\bold{x}-\pi^*R^E\in\Omega(E,
\Lambda(\pi^*E)),
\end{equation}
where $R^E$ is the curvature of $\nabla^E$. The Mathai--Quillen's Thom form $U(h^E,
\nabla^E)\in\Omega^{2k}_\Z(E)$ of $E\to X$ \cite[(1.37)]{BGV} is defined by
$$U(h^E, \nabla^E):=\frac{1}{(2\pi)^k}T(e^{-\Omega}).$$
Note that by pulling back $U(h^E, \nabla^E)$ to the unit ball bundle of $E$ by the map
\dis{y\mapsto\frac{y}{\sqrt{1-\norm{y}^2}}}, it has compact support.

The transgression formula for the Mathai--Quillen's Thom form when the metric is being
rescaled by $t$ is given as follows. Rescale $\Omega$ in (\ref{eq 2.2}) by
$$\Omega_t=\frac{1}{2}t^2\norm{\bold{x}}^2+t\pi^*\nabla^E\bold{x}-\pi^*R^E.$$
Write \dis{U_t(h^E, \nabla^E)=\frac{1}{(2\pi)^k}T(e^{-\Omega_t})}. By
\cite[Proposition 3.5]{Z01} we have
\begin{equation}\label{eq 2.3}
\frac{d}{dt}U_t(h^E, \nabla^E)=-\frac{(-1)^{k(2k+1)}}{(2\pi)^k}dT(\bold{x}
e^{-\Omega_t}).
\end{equation}
Write \dis{a(k)=\frac{(-1)^{k(2k+1)}}{(2\pi)^k}}. By integrating both sides of
(\ref{eq 2.3}) we have
\begin{equation}\label{eq 2.4}
\chi(\pi^*\nabla^E)-U_t(h^E, \nabla^E)=d\int^t_0a(k)T(\bold{x}e^{-\Omega_t})dt.
\end{equation}
Consider the sphere bundle $\wt{\pi}:SE\to X$ and pull $E\to X$ back by $\wt{\pi}$:,
\cdd{\wt{\pi}^*E @>>> E \\ @VVV @VV\pi V \\ SE @>>\wt{\pi}> X}
For any section $v\in\Gamma(X, E)$, (\ref{eq 2.3}) implies that
$$\frac{d}{dt}v^*U_t(h^E, \nabla^E)=-a(k)dT(v\wedge e^{-\Omega_{t, v}}),$$
where \dis{\Omega_{t, v}:=\frac{t^2}{2}\norm{v}^2+t\pi^*\nabla^Ev-\pi^*R^E}. Thus
\begin{equation}\label{eq 2.6}
\chi(\nabla^E)-v^*U(h^E, \nabla^E)=d\bigg(\int^1_0a(k)T(v\wedge e^{-\Omega_{t, v}})dt
\bigg),
\end{equation}
where we have used the fact that $v^*\circ\pi^*=(\pi\circ v)^*=\id^*$. As noted in
\cite[p.56]{BGV}, (\ref{eq 2.6}) is a more general version of the GBC theorem for
vector bundles.

\section{Main Results}

In this section we prove the main results in this paper.

\subsection{Representing differential character by form with singularities}

First of all we prove that for any $(A, k)$-pair $(\omega, \varphi)$ the homomorphism
$s(\varphi):Z_{k-1}(X)\to\R/A$ is a well defined differential character of degree $k$.
\begin{lemma}\label{lemma 1}
Let $(\omega, \varphi)\in\Omega^k(X; A)$. The map $s(\varphi)$ given by (\ref{eq 2.1})
defines a differential character of degree $k$.
\end{lemma}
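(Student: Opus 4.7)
The plan is to verify the three things required of a degree $k$ differential character: that $s(\varphi)$ is a well-defined map out of $Z_{k-1}(X)$, that it is a group homomorphism into $\R/A$, and that there exists a fixed $k$-form $\omega_f \in \Omega^k(X)$ for which $s(\varphi)(\partial c) = \int_c \omega_f \mod A$ for every $c \in C_k(X)$. Since $e(\omega) = \emptyset$, the form $\omega$ is smooth on all of $X$, and the obvious candidate for $\omega_f$ is $\omega$ itself.

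The heart of the argument is well-definedness. Given $z_{k-1} \in Z_{k-1}(X)$, suppose
$$z_{k-1} = \partial c_k + z_{k-1}' = \partial \tilde c_k + \tilde z_{k-1}'$$
are two decompositions with $|z_{k-1}'|, |\tilde z_{k-1}'|$ disjoint from $e(\varphi)$. Then $\partial(c_k - \tilde c_k) = \tilde z_{k-1}' - z_{k-1}'$, whose support avoids $e(\varphi)$; since $e(\omega) = \emptyset$, the chain $c_k - \tilde c_k$ is admissible for $(\omega, \varphi)$. The $(A,k)$-pair condition then forces
$$R[(\omega, \varphi), c_k - \tilde c_k] = \int_{c_k - \tilde c_k} \omega - \int_{\tilde z_{k-1}' - z_{k-1}'} \varphi \in A,$$
which is exactly the statement that the two formulas for $s(\varphi)(z_{k-1})$ agree modulo $A$.

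For the homomorphism property, given $z_{k-1}, w_{k-1} \in Z_{k-1}(X)$ with decompositions $z_{k-1} = \partial c_k + z_{k-1}'$ and $w_{k-1} = \partial d_k + w_{k-1}'$, the sum has the admissible decomposition $z_{k-1} + w_{k-1} = \partial(c_k + d_k) + (z_{k-1}' + w_{k-1}')$, and additivity of the integral combined with well-definedness gives $s(\varphi)(z_{k-1} + w_{k-1}) = s(\varphi)(z_{k-1}) + s(\varphi)(w_{k-1}) \bmod A$.

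Finally, to identify the curvature form, fix $c \in C_k(X)$ and write $\partial c = \partial c_k' + z_{k-1}'$ with $|z_{k-1}'| \cap e(\varphi) = \emptyset$. Then $c - c_k'$ is an admissible $k$-chain for $(\omega, \varphi)$ with boundary $z_{k-1}'$, so the $(A,k)$-pair condition yields
$$\int_{c} \omega - \int_{c_k'} \omega - \int_{z_{k-1}'} \varphi = R[(\omega, \varphi), c - c_k'] \in A,$$
which rearranges to $s(\varphi)(\partial c) = \int_c \omega \bmod A$. Thus $\omega_f = \omega$ works, completing the verification. The step most likely to require care is the first one, since one must invoke the admissibility hypothesis precisely where the condition $e(\omega) = \emptyset$ is used; without it, the difference of chains may not be admissible and the $(A,k)$-pair property cannot be applied.
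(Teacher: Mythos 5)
Your argument is correct and follows essentially the same route as the paper: well-definedness is established by applying the $(A,k)$-pair condition to the difference of two decompositions (using $e(\omega)=\emptyset$ and $\lvert\partial(c_k-\tilde c_k)\rvert\subseteq\lvert z_{k-1}'\rvert\cup\lvert\tilde z_{k-1}'\rvert$), exactly as in the paper's proof. The only cosmetic difference is in identifying the curvature form: the paper simply takes the trivial decomposition $\partial c=\partial c+0$ and reads off $s(\varphi)(\partial c)=\int_c\omega\bmod A$ from the definition, whereas you rederive it from an arbitrary decomposition by one more application of the $(A,k)$-pair condition, which is equally valid.
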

\begin{proof}
We first show that $s(\varphi):Z_{k-1}(X)\to\R/A$ is a well defined map. Suppose
$\wt{z}\in Z_{k-1}(X)$ and $\wt{c}\in C_k(X)$ are such that
$$\wt{z}+\partial\wt{c}=z=\partial c+z'$$
and $\abs{\wt{z}}\cap\abs{e(\varphi)}=\emptyset$. Since
\begin{displaymath}
\begin{split}
\bigg(\int_{\wt{c}}\omega+\int_{\wt{z}}\varphi\bigg)-\bigg(\int_c\omega+\int_{z'}
\varphi\bigg)&=\int_{\wt{c}-c}\omega+\int_{\wt{z}-z'}\varphi\\
&=\int_{\wt{c}-c}\omega+\int_{\partial(c-\wt{c})}\varphi\\
&=\int_{\wt{c}-c}\omega-\int_{\partial(\wt{c}-c)}\varphi\\
&=R[(\omega, \varphi), \wt{c}-c]\in A,
\end{split}
\end{displaymath}
where the integral \dis{\int_{\partial(\wt{c}-c)}\varphi} makes sense because
$\abs{\partial c}\subseteq\abs{c}$ and $\abs{c_1+c_2}\subseteq\abs{c_1}\cup\abs{c_2}$,
it follows that $s(\varphi)$ is a well defined map. Obviously $s(\varphi)$ is a group
homomorphism.

If $z\in Z_{k-1}(X)$ is a coboundary, say $z=\partial c$, it follows from
the definition of $s(\varphi)$ that
$$s(\varphi)(\partial c)=\int_c\omega\mod A.$$
Thus $s(\varphi)$ is a differential character of degree $k$.
\end{proof}

We now give a proof of \cite[Proposition 3]{C73}.
\begin{prop}\label{prop 1}
Every differential character $f\in\wh{H}^k(X; \R/A)$ can be written as a differential
character induced by a differential form with singularities.
\end{prop}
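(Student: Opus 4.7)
The plan is to construct an $(A,k)$-pair $(\omega,\varphi)$ with $e(\omega)=\emptyset$ such that $s(\varphi)=f$. The obvious choice is $\omega:=\omega_f=\delta_1(f)$, so the real task is to produce $\varphi$ on the complement of a polyhedron of dimension $\leq n-k$.

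To choose the singular set, I fix a smooth triangulation $K$ of $X$ and its dual cell decomposition $K^*$. Using \poin duality with $A$-coefficients, I represent the class $[u]:=\delta_2(f)\in H^k(X;A)$ by a cellular $(n-k)$-cycle $\Sigma=\sum_i a_i\sigma_i\in C_{n-k}(X;A)$ in $K^*$ and set $P:=\abs{\Sigma}$. Then $P$ is a subpolyhedron of dimension $\leq n-k$, so the polyhedral condition in the definition of an $(A,k)$-pair is met. Because $[\omega_f]=r([u])$ is \poin dual to $[\Sigma]$, the long exact sequence for $(X,X\setminus P)$ combined with \poin--Lefschetz duality shows that $[\omega_f]$ dies under the restriction $H^k(X;\R)\to H^k(X\setminus P;\R)$. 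Hence $\omega_f|_{X\setminus P}$ is exact, and I pick $\varphi_0\in\Omega^{k-1}(X\setminus P)$ with $d\varphi_0=\omega_f|_{X\setminus P}$.

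To arrange the period condition, $\varphi_0$ must be chosen carefully. An admissible $c\in C_k(X;A)$ has $\abs{\partial c}\cap P=\emptyset$, and after a generic perturbation $\abs{c}\cap P$ consists of finitely many transverse points $p_\alpha$, each lying in the interior of some $(n-k)$-cell $\sigma_{i(\alpha)}$. Excising a small $k$-ball around each $p_\alpha$ and applying Stokes yields
$$R[(\omega_f,\varphi_0),c]=-\sum_\alpha n_\alpha\,\mathrm{res}_{p_\alpha}(\varphi_0),\qquad \mathrm{res}_p(\varphi_0):=\lim_{\epsilon\to 0}\int_{S^{k-1}_\epsilon(p)}\varphi_0,$$
where $n_\alpha\in A$ is the local intersection multiplicity of $c$ with $\sigma_{i(\alpha)}$ at $p_\alpha$. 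I construct $\varphi_0$ so that $\mathrm{res}_p(\varphi_0)=a_{i(p)}$ whenever $p$ is interior to $\sigma_i$: on a tubular neighborhood of each cell take an angular $(k-1)$-form on the normal $k$-disk bundle whose residue on each linking sphere is $a_i$, correct it by an exact form so that its exterior derivative equals $\omega_f$ (possible because the relevant local cohomology vanishes), and patch with a partition of unity. The cocycle obstruction to patching lies in $H^k(X\setminus P;\R)=0$ and is absorbed by subtracting a globally exact form. With this $\varphi_0$ one has $R[(\omega_f,\varphi_0),c]=-c\cdot\Sigma\in A$, since the intersection pairing of an $A$-chain with an $A$-cycle lands in $A$.

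Finally I match $s(\varphi_0)$ with $f$. By construction $\delta_1(s(\varphi_0))=\omega_f=\delta_1(f)$, and the residue computation above gives $\delta_2(s(\varphi_0))=[u]=\delta_2(f)$. Therefore $s(\varphi_0)-f$ is a flat character whose Bockstein in $H^k(X;A)$ is trivial, so it lifts to $H^{k-1}(X;\R)$ and equals $i_2([\eta])$ for some closed $\eta\in\Omega^{k-1}(X)$. Set $\varphi:=\varphi_0-\eta$; this preserves the singular set and the identity $d\varphi=\omega_f$, the pair condition survives because $\int_{\partial c}\eta=\int_c d\eta=0$ for closed $\eta$, and $s(\varphi)=s(\varphi_0)-i_2([\eta])=f$. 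The main obstacle is the third paragraph: the local-to-global construction of $\varphi_0$ with prescribed $A$-valued residues, and its solvability is precisely where the compatibility $r([u])=[\omega_f]$ built into $\wh{H}^k(X;\R/A)$ is used.
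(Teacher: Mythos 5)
Your overall strategy coincides with the paper's: take $\omega=\delta_1(f)$, produce a $(k-1)$-form $\varphi_0$ with singularities on an $(n-k)$-dimensional polyhedron so that $(\omega,\varphi_0)$ is an $(A,k)$-pair inducing a character with the same $\delta_1$ and $\delta_2$ as $f$, and then absorb the flat discrepancy $f-s(\varphi_0)=i_2(\alpha)$ into $\varphi$ using the Cheeger--Simons exact sequence; your last paragraph is exactly the paper's endgame. The difference is that the paper obtains the pair in one stroke by citing the de Rham-type theorem of Allendoerfer--Eells \cite[Theorem 5C]{AE58}, whereas you try to construct it by hand, and that construction (your third paragraph, which you yourself flag as the main obstacle) is where the genuine gaps lie. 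First, the assertion $H^k(X\setminus P;\R)=0$ is false in general: for $X=S^k\times S^k$ and $P=\{pt\}\times S^k$ one has $X\setminus P\simeq S^k$, so $H^k(X\setminus P;\R)\cong\R$. What you actually need is the vanishing of the specific class $[\omega_f]\big|_{X\setminus P}$, which your second paragraph does give; but then the global primitive you subtract to force $d\varphi_0=\omega_f$ is an arbitrary form on $X\setminus P$ and may blow up near $P$, destroying the prescribed residues, so you must argue it can be chosen with controlled growth. Second, and more seriously, your local model (an angular form on the normal $k$-disk bundle) is only described over the interiors of the top-dimensional cells of $\Sigma$; the whole difficulty is to extend across the $(n-k-1)$-skeleton of $P$ where several cells meet, and it is precisely there that the cycle condition $\partial\Sigma=0$ with $A$-coefficients is needed to make the residues compatible. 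Asserting this can be done ``with a partition of unity'' is not a proof --- it is essentially a restatement of \cite[Theorem 5C]{AE58}.

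Two further points. Your use of Poincar\'e duality with $A$-coefficients to represent $\delta_2(f)$ by an $(n-k)$-cycle requires $X$ to be oriented, an assumption not present in the statement (and not needed on the paper's route, which stays cohomological); without it you would have to work with twisted coefficients and local orientations of the normal disks. Finally, some bookkeeping in your argument needs care: the pair condition must hold for \emph{all} admissible chains, not only those meeting $P$ transversally (this is repairable modulo $A$ because $\omega_f$ has periods in $A$, so perturbing $c$ rel $\partial c$ changes $R[(\omega,\varphi_0),c]$ by an element of $A$), and the sign of your residues must be fixed so that the resulting cocycle represents $+[u]$ rather than $-[u]$; with your stated conventions $\delta_2(s(\varphi_0))=-[u]$, in which case $f-s(\varphi_0)$ is not flat and the final correction step fails. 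In short: the skeleton of your proof is the paper's, but the heart of the matter --- the existence of the $(A,k)$-pair --- is left as a sketch with an incorrect supporting claim, so as written the proof has a genuine gap exactly where the paper invokes \cite{AE58}.
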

In \cite{H89} Proposition \ref{prop 1} is proved for $A=\Z$ under the assumption that
$\delta_1(f)$ is the \poin dual of a compact oriented $(n-k)$-submanifold of $X$.
\begin{proof}
Write $\omega=\delta_1(f)$ and $[u]=\delta_2(f)$. By \cite[Theorem 5C]{AE58} there exists
a smooth differential form $\varphi$ of degree $k-1$ with singularities lying on an
$(n-k)$-cycle such that $(\omega, \varphi)$ is an $(A, k)$-pair. Thus the induced
differential character $s(\varphi)\in\wh{H}^k(X; \R/A)$ is given by
$$s(\varphi)(z)=\int_{c}\omega+\int_{z'}\varphi\mod A,$$
where $z\in Z_{k-1}(X)$.

Take a lift $T_f\in C^{k-1}(X; \R)$ of $f$ such that $T_f(z)=f(z)\mod A$. Then there
exists a cocycle $u\in Z^k(X; A)$ depending on $T$ such that
$$\delta T_f=\omega-u.$$
Note that the cohomology class $[u]=\delta_2(f)$ is independent of the choice of $T_f$.
Denote by $T_\varphi\in C^{k-1}(X; \R)$ and $u_\varphi\in Z^k(X; A)$ the corresponding
lift and cocycle of $s(\varphi)$ respectively. Since $\delta T_\varphi=\omega-u_\varphi$,
it follows that
$$u-u_\varphi=\delta(T_\varphi-T_f).$$
Thus $\delta_2(f)=[u]=[u_\varphi]=\delta_2(s(\varphi))$. Since $(\delta_1, \delta_2)(f-s
(\varphi))=0$, it follows from the third exact sequence in \cite[Theorem 1.1]{CS85} that
\dis{f-s(\varphi)\in\frac{H^{k-1}(X; \R)}{r(H^{k-1}(X; A))}\cong\frac{\Omega^{k-1}_{d=0}
(X)}{\Omega^{k-1}_A(X)}}, where the later isomorphism is induced by the de Rham
isomorphism. Thus $f-s(\varphi)=i_2(\alpha)$ for some \dis{\alpha\in
\frac{\Omega^{k-1}_{d=0}(X)}{\Omega^{k-1}_A(X)}}. Thus
\begin{displaymath}
f=s(\varphi+\alpha).\qedhere
\end{displaymath}
\end{proof}

\subsection{A lift of the Gauss--Bonnet--Chern theorem}

In this subsection we state and prove the GBC theorem for oriented Euclidean vector
bundles taking values in differential characters.
\begin{thm}\label{thm 1}
Let $E\to X$ be a real oriented vector bundle of rank $2k$ with a Euclidean metric $h^E$
and a Euclidean connection $\nabla^E$. If $v\in\Gamma(X, E)$ is any section, then
\begin{equation}\label{eq 3.1}
\wh{\chi}(E, h^E, \nabla^E)-v^*\wh{U}(E, h^E, \nabla^E)=i_2\bigg(\int^1_0a(k)T(v\wedge
e^{-\Omega_{t, v}})dt\bigg)
\end{equation}
in $\wh{H}^{2k}(X; \R/\Z)$.
\end{thm}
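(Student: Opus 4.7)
The plan is to lift the classical transgression identity (\ref{eq 2.6}) from differential forms to differential characters by matching the two structural invariants $\delta_1$ and $\delta_2$ on both sides of (\ref{eq 3.1}), and then using a homotopy of sections to annihilate the residual flat character. The Cheeger--Simons exact sequence invoked in the proof of Proposition \ref{prop 1} identifies the joint kernel of $(\delta_1,\delta_2)$ with $H^{2k-1}(X;\R)/r(H^{2k-1}(X;\Z))$, so verifying both structural maps reduces the problem to pinning down a single flat term, which will be handled geometrically.

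First I would compute $\delta_1$ on each side. Naturality of the curvature with respect to pullback gives $\delta_1(\wh{\chi}(\nabla^E))=\chi(\nabla^E)$ and $\delta_1(v^*\wh{U}(h^E,\nabla^E))=v^*U(h^E,\nabla^E)$, while the definition of $i_2$ shows $\delta_1\circ i_2$ sends a form $\alpha$ to $d\alpha$. The resulting equality of forms is precisely (\ref{eq 2.6}), so $\delta_1$ of both sides agree. For $\delta_2$, one has $\delta_2(\wh{\chi}(\nabla^E))=e(E)$, the integral Euler class, and since $\wh{U}(h^E,\nabla^E)$ refines the integral Thom class, $\delta_2(v^*\wh{U}(h^E,\nabla^E))=v^*[U_{\mathrm{Thom}}]=e(E)$, because every section of $\pi:E\to X$ pulls the Thom class back to the Euler class. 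Combined with $\delta_2\circ i_2=0$, the $\delta_2$ values also agree.

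Hence the difference of the two sides of (\ref{eq 3.1}) lies in the image of $H^{2k-1}(X;\R)/r(H^{2k-1}(X;\Z))$ inside $\wh{H}^{2k}(X;\R/\Z)$, i.e., is a flat character of the form $i_2(\beta)$ for some closed $(2k-1)$-form $\beta$. To identify $\beta$ modulo $\Omega^{2k-1}_\Z(X)$, I would invoke the homotopy formula for differential characters along the straight-line homotopy $H:X\times[0,1]\to E$, $H(x,t)=tv(x)$, between the zero section and $v$, together with the standard identification $0^*\wh{U}(h^E,\nabla^E)=\wh{\chi}(\nabla^E)$. The homotopy formula yields
$$\wh{\chi}(\nabla^E)-v^*\wh{U}(h^E,\nabla^E)=-i_2\bigg(\int^1_0 v_t^*\big(\iota_v U(h^E,\nabla^E)\big)\,dt\bigg),$$
where $v_t(x)=tv(x)$. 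Applying Cartan's magic formula to the closed Mathai--Quillen form and comparing with the Berezin-integral derivation that yields (\ref{eq 2.6}) identifies the right-hand side, up to closed forms with integer periods, with $i_2\big(\int^1_0 a(k)T(v\wedge e^{-\Omega_{t,v}})\,dt\big)$, which is exactly (\ref{eq 3.1}).

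The main obstacle is executing the last step cleanly: one must verify that $0^*\wh{U}(h^E,\nabla^E)=\wh{\chi}(\nabla^E)$ as Cheeger--Simons characters (not merely at the level of $\delta_1$ and $\delta_2$), and track signs through the homotopy formula so that the fiber-integrated pullback of the Mathai--Quillen Thom form matches the prescribed transgression. If the zero-section identification is not readily imported from the literature, an alternative closure is to evaluate both sides on an arbitrary cycle $z\in Z_{2k-1}(X)$ and compare residues in $\R/\Z$ using the chain-level formulas recalled in \S2.1, exploiting the injectivity of $i_2$ on closed forms modulo periods.
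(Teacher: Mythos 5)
Your reduction via $(\delta_1,\delta_2)$ is sound as far as it goes: the curvature identity you need is exactly (\ref{eq 2.6}), the characteristic classes match, and $\delta_2\circ i_2=0$, so the difference of the two sides of (\ref{eq 3.1}) is indeed a topologically trivial flat character $i_2(\beta)$ with $\beta$ closed. But the step that is supposed to kill $\beta$ is where the proof has a genuine gap, and it is the crux of the theorem. Your argument rests on the identity $0^*\wh{U}(h^E,\nabla^E)=\wh{\chi}(\nabla^E)$ as an \emph{exact} equality of differential characters. Note that this identity is precisely the case $v=0$ of (\ref{eq 3.1}) (the right-hand side vanishes for $v=0$), so invoking it as a ``standard identification'' makes the argument circular unless you supply a proof or a precise reference tied to the specific construction of $\wh{U}$ used here (the Hopkins--Singer/Mathai--Quillen model of \cite[\S2.4]{HS05}); knowing it only ``at the level of $\delta_1$ and $\delta_2$'' is exactly what your step one already gives and does not pin down the flat part. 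Similarly, your identification of the homotopy transgression $\int_0^1 v_t^*\big(\iota_{\partial_t}H^*U(h^E,\nabla^E)\big)\,dt$ with $\int_0^1 a(k)T(v\wedge e^{-\Omega_{t,v}})\,dt$ ``up to closed forms with integer periods'' is not justified: an uncontrolled closed-form discrepancy lands exactly in the flat ambiguity you are trying to eliminate, so the two halves of your plan do not combine to close the argument. (The identification is in fact an equality of forms, via $v^*U_t(h^E,\nabla^E)=(tv)^*U(h^E,\nabla^E)$ and \cite[Proposition 3.5]{Z01}, but that must be stated and proved, not left modulo integral-period forms.)

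By contrast, the paper never isolates the flat part abstractly. It evaluates all three characters on an arbitrary cycle $z\in Z_{2k-1}(X)$ using the sphere bundle $\wt{\pi}:SE\to X$: one writes $z=\wt{\pi}_*y+\partial w$, uses the Cheeger--Simons formula for $\wh{\chi}$ via the canonical form $Q$ on $SE$, identifies $Q$ with the Mathai--Quillen transgression $\int_0^\infty a(k)T(\bold{x}e^{-\Omega_t})dt$ by naturality (\cite[Proposition 3.6]{CS74}, \cite[Proposition 2.8]{CS85}), derives a parallel chain-level formula for $v^*\wh{U}$, and compares with the evaluation of $i_2$ of the transgression form. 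This is exactly the ``alternative closure'' you mention in your last sentence but do not carry out. To repair your proof you would either have to execute that chain-level comparison, or prove the zero-section identity $0^*\wh{U}=\wh{\chi}$ for the chosen model of $\wh{U}$ together with the exact (not mod-integral-period) form-level identification of the two transgressions; as written, the argument does not establish (\ref{eq 3.1}).
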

In the following proof we adopt the notations in \S2.3.
\begin{proof}
First of all we give a formula of the differential Euler class following \cite[\S 3]{CS85}.
The fibration $SE\to X$ with fibers $\sss^{2k-1}$ gives the following exact sequence
\cdd{H_{2k-1}(\sss^{2k-1}) @>>> H_{2k-1}(SE) @>\wt{\pi}_*>> H_{2k-1}(X) @>>> 0}
Thus for $z\in Z_{2k-1}(X)$, there exist $y\in Z_{2k-1}(SE)$ and $w\in C_{2k}(X)$ such
that
$$z=\wt{\pi}_*(y)+\partial w.$$
Then
\begin{displaymath}
\begin{split}
\wh{\chi}(E, h^E, \nabla^E)(z)&=\wh{\chi}(E, h^E, \nabla^E)(\wt{\pi}_*y+\partial w)\\
&=(\wt{\pi}^*\wh{\chi}(E, h^E, \nabla^E))(y)+\int_w\chi(\nabla^E)\mod\Z.
\end{split}
\end{displaymath}
Consider the sphere bundle $\wt{\pi}:SE\to X$ and pull $E\to X$ back by $\wt{\pi}$; i.e.,
\cdd{\wt{\pi}^*E @>>> E \\ @VVV @VV\pi V \\ SE @>>\wt{\pi}> X}
There exists a ``canonical" form $Q\in\Omega^{2k-1}(SE)$, which is natural, such that
$$\chi(\wt{\pi}^*\nabla^E)=dQ\textrm{ and }\int_{\sss^{2k-1}}Q=1.$$
See \cite[(3.2)]{CS85}. Since the structure group of the vector bundle $\wt{\pi}^*E
\to SE$ is $\SO(2k)$, by applying \cite[Proposition 2.8]{CS85} and
\cite[Proposition 3.6]{CS74}, we have \dis{(\wt{\pi}^*\wh{\chi}(E, h^E, \nabla^E))(y)=
\int_yQ\mod\Z}. Thus, as in \cite[(3.3)]{CS85}, $\wh{\chi}(E, h^E, \nabla^E)(z)$ is
given by
$$\wh{\chi}(E, h^E, \nabla^E)(z)=\int_yQ+\int_w\chi(\nabla^E)\mod\Z.$$

We express the differential form $Q\in\Omega^{2k-1}(SE)$ in terms of Berezin integral.
As noted in \cite[p.51]{Z01} (see also \cite[(2.6)]{B05}), by restricting (\ref{eq 2.4})
to $SE$ and noting that \dis{\lim_{t\to+\infty}T(e^{-\Omega_t})=0}, the right-hand side
of (\ref{eq 2.4}) converges when $t\to\infty$, and therefore
\begin{equation}\label{eq 3.2}
\chi(\wt{\pi}^*\nabla^E)=d\int^\infty_0a(k)T(\bold{x}e^{-\Omega_t})dt.
\end{equation}
Obviously the form \dis{\int^\infty_0a(k)T(\bold{x}e^{-\Omega_t})dt} is natural, so it
differs from $Q$ by an exact form by \cite[Proposition 3.6]{CS74}. It follows from
\cite[Proposition 2.8]{CS85} that
\begin{equation}\label{eq 3.3}
\wh{\chi}(E, h^E, \nabla^E)(z)=\int_y\bigg(\int^\infty_0a(k)T(\bold{x}e^{-\Omega_t})
dt\bigg)+\int_w\chi(\nabla^E)\mod\Z.
\end{equation}

Second we give a formula of the differential Thom class. As remarked in
\cite[\S2.4]{HS05}, the differential Thom class can be associated to oriented real
vector bundles with Euclidean metrics and Euclidean connections using the Mathai--Quillen
formalism. The formula of the pullback of the differential Thom class $v^*\wh{U}(E, h^E,
\nabla^E)\in\wh{H}^{2k}(X; \R/\Z)$ by an arbitrary but fixed section $v\in\Gamma(X, E)$
is given by
\begin{displaymath}
\begin{split}
(v^*\wh{U}(E, h^E, \nabla^E))(z)&=(v^*\wh{U}(E, h^E, \nabla^E))(\wt{\pi}_*y+\partial w)\\
&=\wt{\pi}^*v^*\wh{U}(E, h^E, \nabla^E)(y)+\int_wv^*U(h^E, \nabla^E)\mod\Z.
\end{split}
\end{displaymath}
By (\ref{eq 2.6}) and (\ref{eq 3.2}), we have
\begin{displaymath}
\begin{split}
\wt{\pi}^*v^*U(h^E, \nabla^E)&=\wt{\pi}^*\chi(\nabla^E)-\wt{\pi}^*d\bigg(\int^1_0a(k)T
(v\wedge e^{-\Omega_{t, v}})dt\bigg)\\
&=d\bigg(\int^\infty_0a(k)T(\bold{x}e^{-\Omega_t})dt-\int^1_0a(k)\wt{\pi}^*T(v\wedge
e^{-\Omega_{t, v}})dt\bigg).
\end{split}
\end{displaymath}
Since the differential form inside the parentheses is obviously natural, it follows
from the same argument in deriving (\ref{eq 3.3}) that $(v^*\wh{U}(E, h^E, \nabla^E))(z)$
is given by
\begin{equation}\label{eq 3.4}
\begin{split}
(v^*\wh{U}(E, h^E, \nabla^E))(z)&=\int_y\bigg(\int^\infty_0a(k)T(\bold{x}e^{-\Omega_t})
dt-\int^1_0a(k)\wt{\pi}^*T(v\wedge e^{-\Omega_{t, v}})dt\bigg)\\
&~~~~+\int_wv^*U(h^E, \nabla^E)\mod\Z,
\end{split}
\end{equation}
Finally, we have
\begin{equation}\label{eq 3.5}
\begin{split}
&\qquad i_2\bigg(\int^1_0a(k)T(v\wedge e^{-\Omega_{t, v}})dt\bigg)(z)\\
&=\int_{\wt{\pi}_*y+\partial w}\bigg(\int^1_0a(k)T(v\wedge e^{-\Omega_{t, v}})dt\bigg)
\mod\Z\\
&=\int_y\bigg(\int^1_0a(k)\wt{\pi}^*T(v\wedge e^{-\Omega_{t, v}})dt\bigg)+\int_wd\bigg
(\int^1_0a(k)T(v\wedge e^{-\Omega_{t, v}})dt\bigg)\mod\Z.
\end{split}
\end{equation}
By comparing (\ref{eq 3.3}), (\ref{eq 3.4}), (\ref{eq 3.5}) and (\ref{eq 2.6}),
(\ref{eq 3.1}) holds.
\end{proof}
\bibliographystyle{amsplain}
\bibliography{MBib}
\end{document}